\def\Succ{\mathop{\mathrm{Succ}}\nolimits}
\def\ImmSucc{\mathop{\mathrm{ImmSucc}}\nolimits}
\def\Str{\mathop{\mathrm{Str}}\nolimits}
\def\str#1{\mathrm{#1}}
\def\nth#1{$#1$\textsuperscript{th}}
\newcommand{\conc}{^\smallfrown}           
\newcommand{\card}[1]{\left| #1 \right|}    
\DeclareMathOperator{\val}{val}				
\theoremstyle{definition}
\newtheorem{definition}{Definition}[section]
\newtheorem{example}{Example}
\newtheorem*{remark*}{Remark}
\newtheorem*{claim*}{Claim}
\theoremstyle{remark}
\newtheorem{remark}{Remark}[section]
\theoremstyle{plain}
\newtheorem{theorem}{Theorem}[section]
\newtheorem{observation}[theorem]{Observation}
\newtheorem{lemma}[theorem]{Lemma}
\def\str#1{\mathbf{#1}}
\begin{document}
\bibliographystyle{alpha}
\title{Big Ramsey degrees of 3-uniform hypergraphs are finite}

\author[M. Balko]{Martin Balko}
\address{Department of Applied Mathematics (KAM), Charles University, Ma\-lo\-stransk\'e n\'a\-m\v es\-t\'\i~25, Praha~1, Czech Republic}
\email{balko@kam.mff.cuni.cz}

\author[D. Chodounsk\'y]{David Chodounsk\'y}
\address{Department of Applied Mathematics (KAM), Charles University, Ma\-lo\-stransk\'e n\'a\-m\v es\-t\'\i~25, Praha~1, Czech Republic}
\email{chodounsky@math.cas.cz}

\author[J. Hubi\v cka]{Jan Hubi\v cka}
\address{Department of Applied Mathematics (KAM), Charles University, Ma\-lo\-stransk\'e n\'a\-m\v es\-t\'\i~25, Praha~1, Czech Republic}
\curraddr{}
\email{hubicka@kam.mff.cuni.cz}

\author[M. Kone\v{c}n\'{y}]{Mat\v ej Kone\v{c}n\'{y}}
\address{Department of Applied Mathematics (KAM), Charles University, Ma\-lo\-stransk\'e n\'a\-m\v es\-t\'\i~25, Praha~1, Czech Republic}
\email{matej@kam.mff.cuni.cz}

\author[L. Vena]{Lluis Vena}
\address{Department of Applied Mathematics (KAM), Charles University, Ma\-lo\-stransk\'e n\'a\-m\v est\'\i~25, Praha~1, Czech Republic}
\email{lluis.vena@gmail.com}

\thanks{All authors are supported  by  project  18-13685Y  of  the  Czech  Science Foundation (GA\v CR). First and third author are additionally supported by Center for Foundations of Modern Computer Science (Charles University project UNCE/SCI/004) and by the PRIMUS/17/SCI/3 project of Charles University. Fourth author is supported by the Charles University project GA UK No 378119. This paper is part of a project that has received funding from the European Research Council (ERC) under the European Union’s Horizon 2020 research and innovation programme (grant agreement No 810115).}

\begin{abstract}
	We prove that the universal homogeneous 3-uniform hypergraph has finite big
	Ramsey degrees.  This is the first case where big Ramsey degrees are known to
	be finite for structures in a non-binary language.
	
	Our proof is based on the vector (or product) form of Milliken's Tree Theorem
	and demonstrates a general method to carry existing results on structures in
	binary relational languages to higher arities.
\end{abstract}

\maketitle

\section{Introduction}

Given 3-uniform hypergraphs~$\str{A}$ and~$\str{B}$,
we denote by $\str{B} \choose \str{A}$ the set of all embeddings
from~$\str{A}$ to~$\str{B}$. We write
$\str{C}\longrightarrow {(\str{B})}^\str{A}_{k,\ell}$ to denote the following statement:
\begin{quote}
	For every colouring
	$\chi$ of $\str{C}\choose\str{A}$ with $k$ colours, there exists an embedding
	$f\colon\str{B}\to\str{C}$ such that $\chi$ does not take more than $\ell$ values on
	$f[\str{B}]\choose \str{A}$.
\end{quote}
For a countably infinite structure~$\str{B}$ and its finite induced sub-struc\-ture~$\str{A}$,
the \emph{big Ramsey degree} of $\str{A}$ in $\str{B}$ is the least number
$\ell\in \mathbb \omega + 1$
such that $\str{B}\longrightarrow
	{(\str{B})}^\str{A}_{k,\ell}$ for every $k\in \mathbb \omega$; see~\cite{Kechris2005}.
A countably
infinite structure $\str{B}$ has \emph{finite big Ramsey degrees} if the big
Ramsey degree of $\str{A}$ in $\str{B}$ is finite for every finite substructure $\str{A}$ of $\str{B}$.

A countable hypergraph $\str{A}$ is \textit{(ultra)homogeneous} if every
isomorphism between finite induced sub-hypergraphs extends to an automorphism of
$\str{A}$.  It is well known that there is (up to isomorphism) a unique countable
homogeneous 3-uniform hypergraph $\str{H}$ with the property that every countable
3-uniform hypergraph can be embedded into $\str{H}$, see e.g.~\cite{Macpherson2011}.

Solving a question of Sauer\footnote{Personal communication, 2014.}
we prove the following result, which was announced in~\cite{Hubickabigramsey}.
\begin{theorem}\label{thm:3uniform}
	The universal homogeneous 3-uniform hypergraph $\str{H}$ has finite big Ramsey degrees.
\end{theorem}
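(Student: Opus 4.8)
The plan is to reduce the big Ramsey problem for the 3-uniform hypergraph $\str{H}$ to a Ramsey statement about trees and then invoke the product (vector) form of Milliken's Tree Theorem. Concretely, I would first fix a natural enumeration of the vertex set of $\str{H}$ as $\omega$ and encode $\str{H}$ inside a tree of finite binary sequences: the standard trick is that each vertex $n$ is represented by a node at level $n$ whose entries record, for each pair $\{i,j\}$ with $i<j<n$ already placed, whether $\{i,j,n\}$ is a hyperedge. The subtlety in the non-binary setting is that the value at a given coordinate of the node for $n$ is not a single bit but depends on an earlier \emph{pair} of vertices, so a single tree does not suffice; this is exactly where the \emph{vector} form of Milliken is needed, and I would set up a product of several trees (indexed by something like ``levels at which a first, respectively second, coordinate of a relevant pair was decided'') so that an embedding of $\str{H}$ corresponds to a strong subtree-product, i.e.\ a member of $\prod_i \Str$ in Milliken's sense.

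**Next I would** carry out the combinatorial heart: show that the class of finite structures coding finite sub-hypergraphs of $\str{H}$, together with the relevant notion of ``position'' inside the tree-product, has an appropriate amalgamation/extension property so that every finite sub-hypergraph $\str{A}$ embeds into $\str{H}$ in only finitely many ``types'' relative to the tree coding. The big Ramsey degree of $\str{A}$ will then be bounded by the number of such types, which is finite because $\str{A}$ is finite and the coding is ``local'' (each coordinate depends on boundedly much earlier data). The mechanism: given a colouring $\chi$ of $\binom{\str{H}}{\str{A}}$, pull it back to a colouring of the corresponding finite configurations inside the tree-product; apply the vector Milliken theorem finitely many times (once per type, or once on a suitable coalescence of them) to obtain a strong subtree-product on which $\chi$ depends only on the type; finally decode this subtree-product back to an embedding $f\colon\str{H}\to\str{H}$ witnessing that $\chi$ takes at most ``number of types'' many values on $\binom{f[\str{H}]}{\str{A}}$.

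**The main obstacle** I expect is the passage from binary to ternary — precisely, designing the tree (or tree-product) encoding so that (a) every copy of $\str{H}$ in the tree decodes to an induced sub-hypergraph isomorphic to $\str{H}$, (b) conversely every embedding of $\str{H}$ into $\str{H}$ is captured, up to finitely many types, by a strong subtree-product, and (c) the ``type'' of an embedding of a finite $\str{A}$ is determined by finitely much local data so that Milliken applies. In the binary case (graphs, $K_n$-free graphs, etc.)\ one tree suffices and the types are essentially given by the shape of the meet-closure together with a small amount of ``passing-number'' data; for $3$-uniform hypergraphs, the value of a coordinate is governed by a pair of earlier vertices, so one must track, along each branch, \emph{two} interacting coordinates, and keeping this finite and compatible with strong-subtree structure is the delicate point. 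Once the right product-of-trees framework is in place, the remaining steps — amalgamation to bound the number of types, and the finitely-many applications of the vector Milliken theorem — should be routine, following the binary template (e.g.\ the Laver/Milliken proof for the Rado graph) adapted coordinatewise.
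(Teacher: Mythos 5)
Your overall strategy---code each vertex $n$ by the array recording, for each pair $i<j<n$, whether $\{i,j,n\}$ is a hyperedge, and then apply the vector form of Milliken's Tree Theorem---is exactly the route the paper takes. However, the two points you defer as ``routine, following the binary template'' are precisely where the binary template breaks down, and your sketch does not contain the ideas needed to fix them.

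First, the tree of pair-codes (in the paper, the tree $T_2$ of strictly lower triangular $\{0,1\}$-matrices) has $2^n$ immediate successors at level $n$, so a strong subtree of fixed height $k$ has no uniform bound on its number of nodes. Hence the colouring of copies of $\str{A}$ does \emph{not} induce a finite colouring of height-$k$ strong subtrees, and ``finitely many types because the coding is local'' is false as stated: the number of copies of $\str{A}$ inside a height-$k$ strong subtree of $T_2$ is unbounded. The paper's resolution is not merely to take a product of trees but to use the auxiliary binary tree $T_1$ to \emph{prune} the matrix tree: from a strong vector subtree $(S_1,S_2)$ one extracts a \emph{valuation tree}, keeping at each node $A\in S_2$ only the immediate successors selected by the nodes $\vec v\in S_1$ at the corresponding level (via the extension $A\conc\vec v$). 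Every valuation tree of height $k$ is then structurally isomorphic to the initial segment $T_2({<}k)$, which is what makes the induced colouring finite and lets a single application of vector Milliken go through. Second, to transfer monochromaticity back, you need every copy of $\str{A}$ to be contained in a valuation tree of height bounded by a function of $\card{\str{A}}$ alone. Building this envelope forces an alternation: close under meets in $T_2$, add the needed rows to $T_1$, close under meets in $T_1$, then add matrix restrictions back to $T_2$ to synchronize level sets---and a priori this could cascade indefinitely. The paper stops the cascade by embedding $\str{H}$ into the coding hypergraph via matrices of size $2i+1$ with duplicated columns, so that meets in $T_2$ occur only at odd levels while meets in $T_1$ occur only at even levels; this parity separation is what makes the bound $R(k)$ exist. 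Without some substitute for the valuation-tree mechanism and for this termination argument, your plan does not yet yield a proof.
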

Our result is a contribution to the ongoing project of characterising big
Ramsey degrees of homogeneous structures~\cite{Kechris2005}, \cite[Chapter~6]{todorcevic2010introduction}. 
The origin of this project is in the work of Galvin~\cite{Galvin68,Galvin69} who proved that 
the big Ramsey degree of pairs in the order of the rationals, denoted by $(\mathbb Q,\leq)$, is equal to 2. 
Subsequently, Laver in late 1969 proved that in fact $(\mathbb Q,\leq)$ has finite big Ramsey degrees, 
see~\cite[Page~73]{devlin1979},\cite{erdos1974unsolved,laver1984products} and 
Devlin determined the exact values of $\ell$~\cite{devlin1979}, \cite[Theorems~6.22 and 6.23]{todorcevic2010introduction}.
Using Milliken's Tree Theorem (Theorem~\ref{thm:Milliken}) for a
single binary tree his  argument is particularly intuitive: The
vertices of a binary tree can be seen as finite $\{\, 0,1 \,\}$-words and those as
rationals in the range $(0,1)$ written as binary numbers (with additional digit 1 added to the end of each word to avoid ambiguities).
Since this is a dense linear order it follows that $(\mathbb Q,\leq)$ can be embedded to it.
A colouring of finite subsets of $\mathbb Q$ corresponds then to a colouring of
finite subtrees and thus leads to an application of Milliken's Tree Theorem, 
see~\cite[Section 6.3]{todorcevic2010introduction} for further details. 

Similar ideas can be applied to graphs;
here a graph is coded using a binary tree, where $1$ is used to code an edge. In the 
\emph{passing number representation}, a pair of words $w$, $w'$ with $\card{w}\leq \card{w'}$ is
adjacent if $w'_{\card{w}}=1$~\cite[Theorem 6.25]{todorcevic2010introduction}.
Here, $\card{w}$ denotes the length of the word $w$ and $w_i$ is the letter of $w$ on index $i$, where the indices start from $0$.
This representation was used by Sauer~\cite{Sauer2006} and Laflamme, Sauer, Vuksanovic~\cite{Laflamme2006} who, in
2006, characterised big Ramsey degrees of the Rado graph. 
This was refined to unconstrained
structures in binary languages~\cite{Laflamme2006} and additional special
classes~\cite{dobrinen2016rainbow,NVT2009,laflamme2010partition,mavsulovic2020finite}. Milliken's Tree Theorem remained
the key in all results in the area (here we consider Ramsey's Theorem as a special case of Milliken's Tree Theorem for the unary tree).
See also~\cite{dobrinen2019ramsey2} for a recent survey.

A generalization of these results to structures  of
higher arities (such as hypergraphs) and to structures forbidding
non-trivial substructures remained open for over a decade.
Recent connections to topological dynamics~\cite{zucker2017} renewed the interest in the area
and both these problems were solved recently. Using set-theoretic techniques,
Dobrinen~\cite{dobrinen2017universal} proved that the universal homogeneous
triangle free graph has finite big Ramsey degrees and subsequently generalized
this result to graphs omitting a clique $K_k$ for any $k\geq 3$~\cite{dobrinen2019ramsey}.
This was further generalised by Zucker~\cite{zucker2020} to free amalgamation
classes in binary languages.  Hubi\v cka applied parameter spaces and Carlson--Simpson's Theorem to show the finiteness of big Ramsey degrees
for partial orders and metric spaces~\cite{Hubicka2020CS} giving also
a straightforward proof of~\cite{dobrinen2017universal}.

The main goal of this note is to demonstrate a proof technique, which allows
to reprove some of the aforementioned results in the context of relational structures with higher arities.
The proof, for the first time in this area, makes use of the vector (also called
product) form of Milliken's Tree Theorem.  To
our knowledge this may be also the first combinatorial application of Milliken's
Tree Theorem for trees of unbounded branching~\cite{dodos2015}.

A special case of Theorem~\ref{thm:3uniform} (for colouring vertices) also follows
from recent results of Sauer~\cite{Sauer2020} and Coulson, Dobrinen, Patel~\cite{Coulson2020}.

\section{Preliminaries}

Our argument will make use of the vector (or product) form of Milliken's Tree Theorem.
All definitions and results in this section are taken from~\cite{dodos2016}.
Given an integer~$\ell$, we use both the combinatorial notion $[\ell] = \{\, 1,\ldots,\ell \,\}$
and the set-theoretical convention $\ell = \{\, 0,1,\ldots,\ell-1 \,\}$.

A \emph{tree} is a (possibly empty) partially ordered set $(T, <_T)$ such
that, for every $t \in T$, the set $\{\, s \in T : s <_T t \,\}$ is finite and linearly ordered by $<_T$.
All trees considered are finite or countable.
All nonempty trees we consider are \emph{rooted}, that is, they have a unique minimal element called the \emph{root} of the tree.
An element $t\in T$ of a tree $T$ is called a \emph{node} of $T$ and its \emph{level},
denoted by $\card{t}_T$, is the size of the set $\{\,s \in T : s <_T t\,\}$.
Note that the root has level 0.
For $D \subseteq T$, we write $L_T(D) = \{\, \card{t}_T : t \in D \,\}$ for the \emph{level set} of $D$ in $T$.
We use $T(n)$ to denote the set of all nodes of $T$ at level $n$,
and by $T({<}n)$ the set $\{\, t\in T \colon \card{t}_T < n \,\}$.
The \emph{height} of $T$ is the minimal natural number $h$ such that $T(h)=\emptyset$.
If there is no such number $h$, then we say that the height of $T$ is $\omega$.
We denote the height of $T$ by $h(T)$.

Given a tree $T$ and nodes $s, t \in T$ we say that $s$ is a \emph{successor} of $t$ in $T$ if $t \leq_T s$.
The node $s$ is an \emph{immediate successor} of $t$ in $T$  if
$t<_T s$ and there is no $s'\in T$ such that $t<_T s'<_T s$.
We denote the set of all successors of $t$ in $T$ by $\Succ_T (t)$
and the set of immediate successors of $t$ in $T$ by $\ImmSucc_T (t)$.
We say that the tree $T$ is \emph{finitely branching} if $\ImmSucc_T (t)$ is finite for every $t \in T$.

For $s, t \in T$, the \emph{meet} $s\wedge_T t$ of $s$ and $t$ is the largest $s' \in T$ such that $s' \leq_T s$ and $s' \leq_T t$.
A node $t\in T$ is \emph{maximal} in $T$ if it has no successors in $T$.
The tree $T$ is \emph{balanced} if it either has infinite height and no maximal nodes, or all its maximal nodes are in
$T(h-1)$, where $h$ is the height of $T$.

A \emph{subtree} of a tree $T$ is a subset $T'$ of $T$ viewed as a tree
equipped with the induced partial ordering
such that $s \wedge_{T'} t = s \wedge_{T} t$ for each $s,t \in T'$.
Note that our notion of a subtree differs from the standard terminology, since we require the additional condition about preserving meets.

\begin{definition}
	A subtree $S$ of a tree $T$ is a \emph{strong subtree} of $T$
	if either $S$ is empty, or $S$ is nonempty and satisfies the following three conditions.
	\begin{enumerate}
		\item The tree $S$ is rooted and balanced.
		\item Every level of $S$ is a subset of some level of $T$, that is, for every $n < h(S)$ there exists $m \in \omega$ such that $S(n) \subseteq T (m)$.
		\item For every non-maximal node $s \in S$ and every $t \in \ImmSucc_T (s)$ the set
		      $\ImmSucc_S (s) \cap \Succ_T (t)$ is a singleton.
	\end{enumerate}
\end{definition}

\begin{observation}\label{obs-subtree}
	If $E$ is a subtree of a balanced tree $T$, then there exists a strong subtree  $S \supseteq E$ of $T$ such that $L_T(E) = L_T(S)$.\qed
\end{observation}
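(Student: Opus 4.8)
The plan is to construct $S$ explicitly, building it level by level so that its $n$th level lies inside level $m_n$ of $T$, where $L_T(E)=\{m_0<m_1<\cdots\}$, and choosing, in each ``slot'' forced by the strong-subtree branching rule, a node that points toward a node of $E$ whenever this is possible. If $E=\emptyset$ we take $S=\emptyset$, so assume $E\neq\emptyset$. Then $E$ has a root $r$: since $E$ is a subtree it is closed under $T$-meets, so for any $a\in E$ the finite linearly ordered set $\{s\in E:s\le_T a\}$ has a minimum, and meet-closure shows this minimum lies below every node of $E$; note $\card{r}_T=m_0$. Enumerate $L_T(E)$ as $(m_n)_{n<N}$ with $1\le N\le\omega$. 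It is convenient first to enlarge $E$ to the set $\hat E$ of all $T$-predecessors of nodes of $E$ whose level lies in $L_T(E)$; then $E\subseteq\hat E$, every node of $\hat E$ sits at some level $m_n$, and $\hat E$ is closed under passing to such predecessors.

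I would then define $S=\bigcup_{n<N}S(n)$ by recursion, with $S(0)=\{r\}$ and, for $n+1<N$,
\[
S(n+1)=\bigl\{\,\sigma(s,t):s\in S(n),\ t\in\ImmSucc_T(s)\,\bigr\},
\]
where $\sigma(s,t)$ is defined as follows: if there is $e\in E$ with $t\le_T e$ and $\card{e}_T\ge m_{n+1}$, then $\sigma(s,t)$ is the predecessor of $e$ at level $m_{n+1}$; otherwise $\sigma(s,t)$ is any successor of $t$ at level $m_{n+1}$. The second clause is always available: $T$ is balanced, so any node at a level below $h(T)$ has successors at every level between its own and $h(T)-1$, and $\card{t}_T=m_n+1\le m_{n+1}\le h(T)-1$. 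The first clause is independent of the choice of $e$: if $e,e'\in E$ both qualify then $e\wedge_T e'\in E$ lies above $t$, hence at a level of $L_T(E)$ exceeding $m_n$, hence at a level $\ge m_{n+1}$, so $e$ and $e'$ have the same predecessor at level $m_{n+1}$.

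It then remains to verify the requirements. Each $S(n)$ is a nonempty subset of $T(m_n)$: membership in $T(m_n)$ is built in, and nonemptiness follows because any $s\in S(n)$ with $n+1<N$ lies at level $m_n<m_{n+1}<h(T)$ and so, $T$ being balanced, is non-maximal in $T$ and has an immediate successor. This yields condition~(2) and $L_T(S)=L_T(E)$. Condition~(1) holds because $S$ is rooted at $r$, has height $N$, and --- when $N<\omega$ --- has all its maximal nodes in $S(N-1)$. For condition~(3) one checks that $\ImmSucc_S(s)=\{\sigma(s,t):t\in\ImmSucc_T(s)\}$ and notes that $\sigma(s,t')$ is a successor of $t$ only when $t'=t$, because distinct immediate $T$-successors of $s$ are incomparable in $T$. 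To see that $S$ is genuinely a subtree of $T$ one checks that $x\wedge_T y\in S$ for $x,y\in S$: following the $S$-chains from $r$ to $x$ and to $y$, if $u$ is their last common node then a short case analysis --- using that the two chains leave $u$ through distinct, hence $T$-incomparable, immediate $T$-successors of $u$ --- gives $x\wedge_T y=u\in S$. Finally, $E\subseteq S$ follows from the claim that $\hat E\cap T(m_n)\subseteq S(n)$, proved by induction on $n$: for $e\in\hat E\cap T(m_{n+1})$, the predecessor $s$ of $e$ at level $m_n$ lies in $\hat E\cap T(m_n)\subseteq S(n)$, the predecessor $t$ of $e$ at level $m_n+1$ is an immediate $T$-successor of $s$, and a witness $e'\in E$ with $e\le_T e'$ forces $\sigma(s,t)=e$.

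The main obstacle is conceptual and already present in the statement: a subtree $E$ of $T$ need not have its tree-levels aligned with those of $T$ --- two nodes at the same level of $E$ may sit at different levels of $T$, and the immediate successor in $E$ of a node of $E$ may skip several levels of $L_T(E)$ --- so $E$ by itself is usually not a strong subtree and must be both enlarged and re-coordinated before the branching rule can be applied. This is exactly what passing to $\hat E$ and letting $\sigma$ aim at $E$ accomplish; once that is in place, the remaining verifications are routine bookkeeping, with the meet-preservation check for $S$ being the only one requiring a little care.
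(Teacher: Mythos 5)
Your proof is correct and complete. The paper states this observation without proof (the \qed{} is attached to the statement itself), and your level-by-level construction --- aligning $S$ with the $T$-levels $L_T(E)$, closing $E$ under predecessors at those levels, and steering each branching slot $\sigma(s,t)$ toward $E$ when possible --- is exactly the standard argument the authors are implicitly invoking; all the verifications (well-definedness of $\sigma$, meet-closure, conditions (1)--(3), and $E\subseteq S$) check out.
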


A \emph{vector tree} (sometimes also called a \emph{product tree}) is a finite sequence $\mathbf T = (T_1, \ldots , T_d)$ of
trees having the same height $h(T_i)$ for all $i \in [d]$.
This common height is the \emph{height} of $\mathbf T$
and is denoted by $h(\mathbf T)$. A vector tree $\mathbf T = (T_1, \ldots , T_d)$ is \emph{balanced}
if the tree $T_i$ is balanced for every $i \in [d]$.

If $\mathbf T = (T_1, \ldots ,T_d)$ is a vector tree, then a \emph{vector subset} of $\mathbf T$ is a sequence
$\mathbf D = (D_1, \ldots , D_d)$ such that $D_i \subseteq T_i$ for every $i \in [d]$.
We say that $\mathbf D$ is \emph{level compatible} if there exists $L \subseteq \omega$ such
that $L_{T_i} (D_i ) = L$ for every $i \in [d]$.
This (unique) set $L$ is denoted by $L_\mathbf T(\mathbf D)$
and is called the \emph{level set} of $\mathbf D$ in $\mathbf T$.

\begin{definition}
\label{def:T1T2}
	Let $\mathbf T = (T_1 , \ldots , T_d )$ be a vector tree.
	A \emph{vector strong subtree} of~$\mathbf T$ is a level compatible vector subset $\mathbf S = (S_1, \ldots , S_d)$ of $\mathbf T$
	such that $S_i$ is a strong subtree of $T_i$ for every $i \in [d]$.
\end{definition}

For every $k \in \omega+1$ with $k \leq h(\mathbf T)$, we use $\Str_k (\mathbf T)$ to denote the set of all vector strong
subtrees of $\mathbf T$ of height $k$.
We also use $\Str_{\leq k}(\mathbf T )$ to
denote the set of all strong subtrees of $\mathbf T$ of height at most $k$.

\begin{theorem}[Milliken~\cite{Milliken1979}]\label{thm:Milliken}
	For every rooted, balanced and finitely branching vector tree $\mathbf T$ of infinite height,
	every nonnegative integer $k$ and every finite colouring of $\Str_k (\mathbf T)$ there is $\mathbf S \in \Str_\omega(\mathbf T)$
	such that the set $\Str_k (\mathbf S)$ is monochromatic.
\end{theorem}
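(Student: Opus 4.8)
The plan is to derive the theorem from the Halpern--L\"auchli theorem (vector form) by induction on the height~$k$. First I would observe that a vector strong subtree of height~$1$ is nothing but a choice of one node in each component at one common level, so that $\Str_1(\mathbf{T}) = \bigcup_{n}\prod_{i=1}^{d} T_i(n)$; thus the case $k=1$ is precisely the strong-subtree form of the Halpern--L\"auchli theorem, which I would take as the fundamental input, citing it from~\cite{dodos2016} (where it is proved by a density-increment argument). The case $k=0$ is trivial, since $\Str_0(\mathbf{T})$ has a single element.

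For the inductive step I would assume $k\geq 1$ and that the theorem holds for height~$k$, and fix a finite colouring $c$ of $\Str_{k+1}(\mathbf{T})$. Each $\mathbf{R}\in\Str_{k+1}(\mathbf{T})$ splits into its \emph{stem} $\mathbf{R}^-\in\Str_k(\mathbf{T})$, obtained by deleting the top level, together with that top level; for a fixed stem $\mathbf{R}^-$ the admissible top levels correspond exactly to the height-$1$ vector strong subtrees of the finite vector forest whose components are the sets $\Succ_{T_i}(t)$, one for every $i\in[d]$, every node~$s$ of $R_i^-$ at its top level, and every $t\in\ImmSucc_{T_i}(s)$. Since this forest has only finitely many components, the Halpern--L\"auchli theorem applies to the colouring sending an admissible top level to the $c$-value of the resulting height-$(k+1)$ vector strong subtree, and produces a vector strong subtree of~$\mathbf{T}$ on which that colour is constant; note that this already forces the use of the \emph{vector} form even when $d=1$. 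I would then run a fusion: build a decreasing sequence $\mathbf{T}=\mathbf{U}_0,\mathbf{U}_1,\dots$ of rooted, balanced, finitely branching vector strong subtrees of infinite height, where $\mathbf{U}_{m+1}$ is obtained from $\mathbf{U}_m$ by the above use of the Halpern--L\"auchli theorem so as to stabilise the $c$-colour of all height-$(k+1)$ extensions of the $m$-th stem in a fixed enumeration. Because each $T_i$ is finitely branching, at each stage only finitely many stems of any given level need attention, the sequence converges to some $\mathbf{S}\in\Str_\omega(\mathbf{T})$, and on $\mathbf{S}$ the value $c(\mathbf{R})$ depends only on~$\mathbf{R}^-$. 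This induces a finite colouring $\overbar{c}$ of $\Str_k(\mathbf{S})$, and applying the inductive hypothesis to $\mathbf{S}$ and~$\overbar{c}$ yields $\mathbf{S}'\in\Str_\omega(\mathbf{S})\subseteq\Str_\omega(\mathbf{T})$ with $\overbar{c}$ constant on $\Str_k(\mathbf{S}')$; since the stem of every member of $\Str_{k+1}(\mathbf{S}')$ lies in $\Str_k(\mathbf{S}')$ and $\Str_{k+1}(\mathbf{S}')\subseteq\Str_{k+1}(\mathbf{S})$, the colouring $c$ is constant on $\Str_{k+1}(\mathbf{S}')$, which completes the induction.

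The hard part will be the Halpern--L\"auchli theorem itself: whatever route one chooses---the original Halpern--L\"auchli matrix argument, Harrington's forcing proof, or the density-increment method of~\cite{dodos2016}---this is the genuine combinatorial core, and I would treat it as a black box. Granting it, the remaining delicate points all lie in the fusion: arranging the enumeration of stems so that the nested sequence of vector strong subtrees converges to a vector tree of \emph{infinite} height with every stem of the limit eventually treated (this is exactly where finite branching and balancedness are needed, so that each level settles and no component acquires a maximal node), verifying that the reduced colouring $\overbar{c}$ is well defined on the limit, and invoking the transitivity of the strong-subtree relation to pass from $\mathbf{S}$ to~$\mathbf{S}'$ without losing monochromaticity.
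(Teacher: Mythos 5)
This statement is Milliken's theorem itself, which the paper imports as a black box (citing \cite{Milliken1979} and taking the surrounding definitions from \cite{dodos2016}); there is no proof in the paper to compare against, so I can only assess your outline on its own terms. What you describe is the standard and correct derivation: induction on $k$ with the strong-subtree form of the Halpern--L\"auchli theorem as base case and as the engine of the inductive step, a fusion to stabilise the colour of a height-$(k+1)$ tree as a function of its stem, and the inductive hypothesis applied to the reduced colouring. Your identification of the admissible top levels over a fixed stem with the height-$1$ vector strong subtrees of the forest indexed by triples $(i,s,t)$ with $t\in\ImmSucc_{T_i}(s)$ is exactly right, as is the remark that this forces the vector form of Halpern--L\"auchli even when $d=1$. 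Two points deserve more care than your sketch gives them. First, Halpern--L\"auchli applied to that successor forest yields a strong subtree of the \emph{forest}, not of $\mathbf{T}$; to form $\mathbf{U}_{m+1}$ you must reassemble it into a strong vector subtree of $\mathbf{U}_m$ that still contains the stem being treated and also prescribes the components above the immediate successors \emph{not} sitting over the stem (the usual fix is to run the argument on the forest of all immediate successors at the relevant level, with a colouring that ignores the extraneous coordinates). Second, the fusion must be organised so that stems are enumerated as they appear inside the shrinking trees $\mathbf{U}_m$ (finitely many per level, by finite branching), and so that the levels of the limit $\mathbf{S}$ are frozen one at a time; you flag this but it is where most write-ups spend their effort. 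Granting the Halpern--L\"auchli theorem, your argument is sound and is essentially the proof found in \cite{dodos2016} and in Chapter~6 of \cite{todorcevic2010introduction}.
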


\section{Proof of Theorem~\ref{thm:3uniform}}\label{sec:proof}
Given an integer $n \geq 0$, a \emph{$\{\, 0,1 \,\}$-vector} $\vec{v}$ of length $n$
is a function $\vec{v} \colon n \to 2$.
We write $\card{\vec{v}} = n$ to denote the length of $\vec{v}$ and, 
for $i < \card{\vec{v}}$, we use $v_i$ to denote the \nth{i} coordinate $\vec{v}(i)$ of $\vec{v}$.
In particular, we permit the empty vector and the first coordinate has index 0.
We use the standard vocabulary for matrices. 
An $n\times n$ $\{\, 0,1 \,\}$-matrix $A$ is a function $A \colon n\times n \to 2$. We use $\card{A} = n$ to denote the number of rows (and columns) of $A$.
For $k \leq \card{A}$, we write $A \restriction k$ for the sub-matrix of $A$ with domain $k \times k$.
The \emph{\nth{i} row} of the matrix $A$ is the vector $\vec{v} \colon j \mapsto A_{i, j}$.
The value $A(i,j)$, the entry in the \nth{i} row and the \nth{j} column of $A$ is denoted by $A_{i,j}$.
Note that we start the indexing of entries of $A$ from $0$.
The matrix $A$ is \emph{strictly lower triangular} if $A_{i, j} = 0$ for all $i$ and $j$ with $i \leq j$.

The main idea of the proof of Theorem~\ref{thm:3uniform} is to extend
the passing number representation of graphs to 3-uniform hypergraphs.
This can be done naturally when one understands the passing number
representation in the context of adjacency matrix of a graph as outlined
below.

Consider the universal countable homogeneous graph $\str{R}$ (the Rado graph) and enumerate it by fixing its vertex set $\omega$.
This yields the \emph{asymmetric adjacency matrix} $A$ of $\str{R}$. (Recall that this is an infinite $\{\,0,1\,\}$-matrix with
$A_{j,i}=1$ if and only if $i<j$ and $i$ is adjacent to $j$ in $\str{R}$.)
Assign to every vertex $i\in \omega$ a $\{\,0,1\,\}$-word $w(i)$ which corresponds
to the strictly sub-diagonal part of the \nth{i} row of $A$.  It follows that, for all $i,j\in \omega$ with $i<j$, we have
$|w(i)|=i$ and $w(j)_i=w(j)_{|w(i)|}=1$ if and only if $i$ is
adjacent to $j$ in $\str{R}$. This exactly corresponds to the passing number
representation used to show that big Ramsey degrees of $\str{R}$ are
finite~\cite[Theorem 6.25]{todorcevic2010introduction}, \cite{Sauer2006,Laflamme2006}.

Now consider the countable homogeneous 3-uniform hypergraph $\str{H}$ and put
$H=\omega$. Proceeding analogously as before, one can consider the \emph{asymmetric
	adjacency tensor of $\str{H}$} which is a function $A'\colon n\times n\times
	n\to 2$ defined by $A'(k,j,i)=A'_{k,j,i}=1$ if and only if $i<j<k$ and $i,j,k$ forms an
hyper-edge of $\str{H}$. Now assign to every vertex $i\in\omega$ an $i\times i$
matrix $M(i)$ such that $i<j<k$ forms a hyper-edge of $\str{H}$ if and only if
$M(k)_{j,i}=1$.  This matrix can again be seen as the sub-diagonal part of
a ``slice'' of the adjacency tensor $A'$ since $M(k)_{j,i}=A'_{j,k,i}$ for every
$i,j<k$.

Our proof of Theorem~\ref{thm:3uniform} is based on a refinement of this matrix
representation.   However, in contrast to binary
structures, we need to solve one additional difficulty. The tree of matrices (see
$T_2$ in Definition~\ref{def:trees} and Figure~\ref{fig:T2}) is no longer
uniformly branching and there is no bound on the number of nodes of a strong subtree
of a given height.  This is the main motivation for using a vector tree we
define now.

\begin{definition}\label{def:trees}
	Let $\str{T} = (T_1, T_2)$ be the vector tree, where:
	\begin{enumerate}
		\item The binary tree $(T_1, <_{T_1})$ consists of all finite $\{\, 0,1 \,\}$-vectors ordered by the end-extension. More precisely, we have $\vec{u}\leq_{T_1}\vec{v}$ if $\card{\vec{u}}\leq \card{\vec{v}}$ and $u_i=v_i$ for every $i\in \card{u}$.
		      The root of $T_1$ is the empty vector.
		\item Nodes of the tree $T_2$ are all finite strictly lower triangular (square) $\{\, 0,1 \,\}$-matrices ordered by extension.
		      That is, we have $A \leq_{T_2} B$ if and only if $\card{A} \leq \card{B}$ and $A_{i,j}=B_{i,j}$ for every $i,j \in \card{A}$.
		      The root of $T_2$ is the empty matrix; see Figure~\ref{fig:T2}.
		      \begin{figure}
			      \centering
			      \includegraphics[scale=0.8]{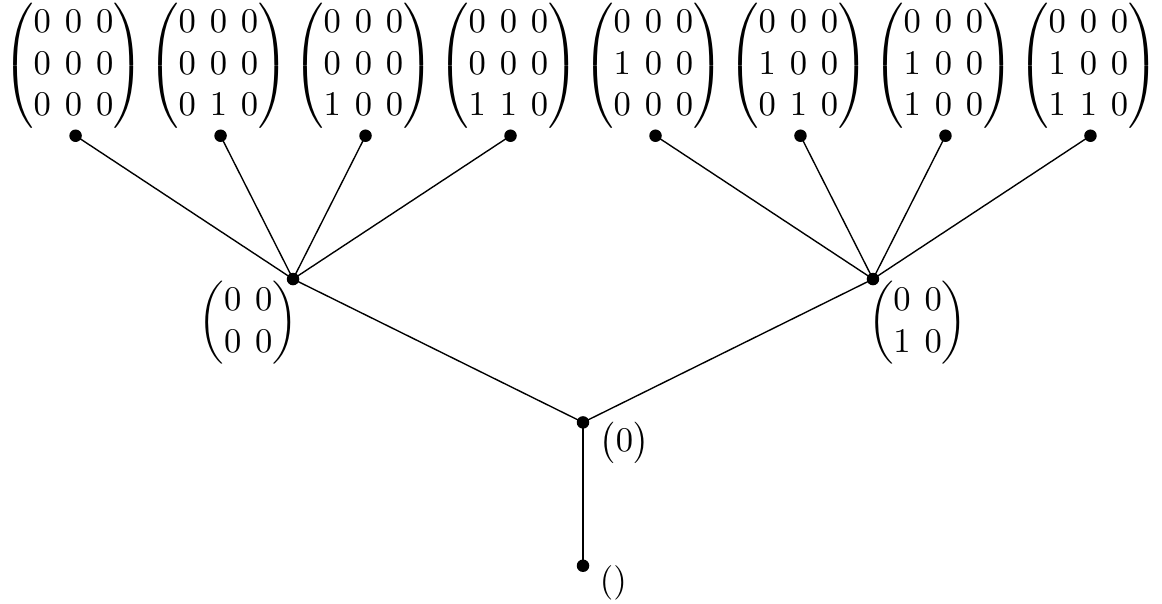}
			      \caption{First 4 levels of the tree $T_2$.}
			      \label{fig:T2}
		      \end{figure}
	\end{enumerate}
\end{definition}
\begin{remark}
	The tree $T_2$ corresponds to the tree of 1-types of $\str{H}$ (see e.g.~\cite{Coulson2020}).
	The tree $T_1$ in our construction has a natural meaning too: while the tree $T_2$ represents vertices and hyper-edges,
	the tree $T_1$ represents the union of all graphs that are created from a 3-uniform hypergraph by fixing a vertex $v$ and considering the graph on the
	same vertex set with edges induced by hyper-edges containing~$v$.
\end{remark}

A key element of the proof is a correspondence between strong vector subtrees of
the vector tree $\str{T}$ and special subtrees of $\str{T}_1$ with 
shape isomorphic to the initial segments of $\str{T}_1$. 

For a matrix $A$ and a vector $\vec{v}$ with $\card{A} = \card{\vec{v}} = n$, the \emph{extension} $A\conc\vec{v}$ of $A$ is the $(n+1)\times(n+1)$ matrix
\[ \left(
	\begin{array}{c;{2pt/2pt}r}
			\mbox{ \LARGE $A$ } & \begin{matrix} 0 \\ \vdotswithin{0}\\0 \end{matrix} \\ \hdashline[2pt/2pt]
			\vec{v}             & 0
		\end{array}
	\right).
\]
More precisely, the matrix $A\conc\vec{v}$ is given by setting
\begin{enumerate}
	\item ${(A\conc\vec{v})}_{i,j} = A_{i,j}$ for all $i, j \in n$,
	\item ${(A\conc\vec{v})}_{n,i} = {v}_i$ for every $i \in n$, and
	\item ${(A\conc\vec{v})}_{j,n} = 0$ for every $j \in n+1$.
\end{enumerate}
Note that if $A$ is a strictly lower triangular matrix, then $A\conc\vec{v}$ is strictly lower triangular as well.

\begin{definition}\label{def:valuation_tree}
	Let $(S_1,S_2)$ be a strong vector subtree of $\str{T}$ of height $k \in \omega +1$.  In other words, $(S_1,S_2) \in \Str_k(\str{T})$.
	The \emph{valuation tree $\val(S_1,S_2)$ corresponding to $(S_1,S_2)$} is a subset of $S_2$ defined by the following
	recursive rules:
	\begin{enumerate}
		\item The root of $\val(S_1,S_2)$ is the root of $S_2$.
		\item If $A \in \val(S_1,S_2)$, $\vec{v} \in S_1(\card{A}_{S_2})$, and
		      $\{\, C \,\} = \ImmSucc_{S_2}(A) \cap \Succ_{T_2}(A\conc \vec{v})$, then $C \in \val(S_1,S_2)$.
		\item There are no other nodes in $\val(S_1,S_2)$.
	\end{enumerate}
	Note that $\val(S_1,S_2)$ is a subtree of $S_2$ and hence also a subtree of~$T_2$.
	Also, the height of $\val(S_1,S_2)$ equals $k$ and the number of nodes of $\val(S_1,S_2)$ depends only on $k$,
	see Lemma~\ref{obs:isom}. 
	
	A tree $T \subseteq T_2$ is a \emph{valuation tree} if $T = \val(S_1,S_2)$ for some $(S_1,S_2) \in \Str_{\leq\omega}(\str{T})$.
\end{definition}
\begin{example}
	\begin{figure}
		\centering
		\includegraphics[scale=0.8]{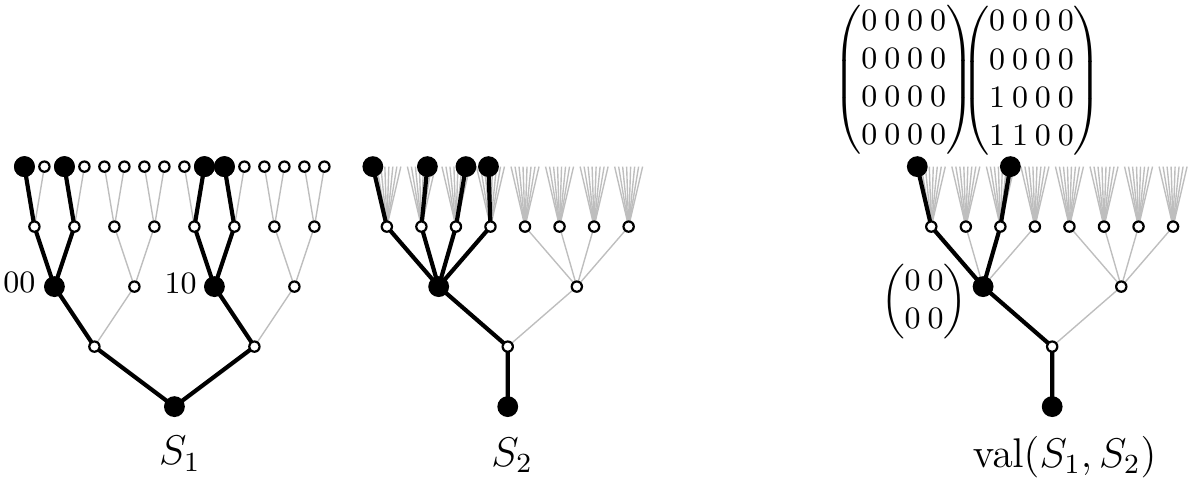}
		\caption{A valuation tree (right) constructed from a vector strong subtree (left).}
		\label{fig:T2b}
	\end{figure}
	See Figure~\ref{fig:T2b} for an example of a valuation tree constructed from a product strong subtree.
\end{example}

For two subtrees $T$ and $T'$ of $T_2$, a function $f \colon T \to T'$ is a \emph{structural isomorphism}
if it is an isomorphism of trees (preserving relative heights of nodes), 
and for every $A, B, C \in T$ with $\card{A}\leq \card{B} < \card{C}$
it also holds that
${f(C)}_{\card{f(B)},\card{f(A)}} = C_{\card{B},\card{A}}$.
\begin{lemma}\label{obs:isom}
	For every $k \in \omega +1$ and every valuation subtree $T$ of $T_2$ of height $k$,
	there exists a unique structural isomorphism $f \colon T_2({<}k) \to T$.
\end{lemma}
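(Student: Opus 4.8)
The plan is to build the structural isomorphism $f$ level by level, using the recursive definition of the valuation tree, and to prove both existence and uniqueness simultaneously by induction on the level. First I would observe that since $T$ is a valuation tree, we may fix $(S_1,S_2)\in\Str_{\leq\omega}(\str{T})$ with $T=\val(S_1,S_2)$; unravelling Definition~\ref{def:valuation_tree}, the nodes of $T$ are indexed by paths in $T_2({<}k)$ in a canonical way. Concretely, I would set up a bijection between the nodes of $T_2({<}k)$ and the nodes of $T$ by sending the root of $T_2$ to the root of $S_2$ (which is the root of $T$), and, having assigned $f(D)=A$ for $D\in T_2(n)$, $n<k-1$, sending each immediate successor $D\conc\vec{v}$ of $D$ in $T_2$ (here $\vec{v}$ ranges over $\{\,0,1\,\}$-vectors of length $n$) to the unique $C$ with $\{\,C\,\}=\ImmSucc_{S_2}(A)\cap\Succ_{T_2}(A\conc\vec{v}')$, where $\vec{v}'\in S_1(\card{A}_{S_2})$ is the unique node of $S_1$ at that level whose ``pattern'' of passing through the levels of $S_1$ matches $\vec{v}$. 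The point is that condition (3) of the definition of a strong subtree guarantees these immediate successors are in bijective correspondence with the immediate successors of $D$ in $T_2$, so $f$ is a well-defined tree isomorphism preserving relative heights.

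Next I would verify the structural condition: for $A,B,C\in T$ with $\card{A}\le\card{B}<\card{C}$, we must check ${f(C)}_{\card{f(B)},\card{f(A)}}=C_{\card{B},\card{A}}$ — wait, the condition is stated on $T$, so I restate it: for $D_A,D_B,D_C\in T_2({<}k)$ with the appropriate height relations, ${f(D_C)}_{\card{f(D_B)},\card{f(D_A)}}=(D_C)_{\card{D_B},\card{D_A}}$. This follows from tracing the $\conc$ operation: when we pass from a node at level $n$ of $T_2$ to its successor via appending $\vec{v}$, the new row of the matrix records exactly $\vec{v}$; and on the $S_2$-side, the corresponding step appends the row $\vec{v}'$ read off from $S_1$. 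By the construction of the bijection, $\vec{v}'$ agrees with $\vec{v}$ coordinate-for-coordinate in the sense that the $\card{D_A}$-th coordinate of $\vec{v}$ equals the $\card{f(D_A)}$-th coordinate of $\vec{v}'$ (both being the passing number at the level corresponding to $D_A$). Since $(D_C)_{\card{D_B},\card{D_A}}$ is precisely the $\card{D_A}$-th coordinate of the row appended at the step from $D_B$'s level to $D_B$'s successor's level (using that $\card{D_B}<\card{D_C}$ so this row is part of $D_C$), and likewise on the image side, the two agree. This is essentially bookkeeping about how the matrices are assembled, so I would present it compactly rather than writing out all index manipulations.

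Uniqueness I would handle by the same induction: any structural isomorphism $f$ must send the root to the root, and at each stage the tree-isomorphism property forces $f$ to map $\ImmSucc_{T_2}(D)$ bijectively onto $\ImmSucc_{T}(f(D))$, while the structural condition — applied with $A=B=D\conc\vec{v}$ and varying the ``column'' index — pins down which successor each $D\conc\vec{v}$ goes to, because distinct $\vec{v}$ give distinct appended rows and the structural condition reads those rows off from $f(D\conc\vec{v})$. Hence $f$ is determined on level $n+1$ once it is determined on level $n$.

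The main obstacle I expect is purely notational: keeping straight the three different ``levels'' in play — the level $\card{A}_{S_2}$ of a matrix inside $S_2$ (which governs which level of $S_1$ one reads), the level of a node inside the abstract tree $T$, and the size $\card{A}$ of the matrix $A$ itself as an entry-index — and making sure the correspondence between the $\{\,0,1\,\}$-vectors indexing successors in $T_2$ and the nodes of $S_1$ is spelled out precisely enough that the structural identity becomes transparent. No deep idea is needed beyond the already-established fact (noted after Definition~\ref{def:valuation_tree}) that the number of nodes of a valuation tree depends only on its height, which is exactly what the level-by-level bijection re-proves.
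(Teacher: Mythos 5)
Your proposal is correct and follows essentially the same level-by-level induction as the paper's proof: map the root to the root, extend by matching each branching vector $\vec v$ of length $n$ to the unique node of $S_1$ at the corresponding level with the same pattern on the level set, and get uniqueness because the structural condition reads the appended rows off the image. The only blemish is the instantiation ``$A=B=D\conc\vec v$'' in the uniqueness step, which should be $C=D\conc\vec v$ with $B$ at the level of $D$ and $A$ varying; the surrounding text makes the intent clear, and (like the paper) you leave implicit the small point that distinct nodes of $S_1(n)$ already differ at a coordinate belonging to the level set, which is exactly what makes those read-off entries sufficient to pin down the successor.
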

\begin{proof}
	For $k\in \omega$ we use induction on $k$.
	There is nothing to prove for $k = 0$. 
	If $k=1$, then the function $f$ mapping the empty matrix to the root of $T$ is the unique structural isomorphism. 
	Assume the induction hypothesis does hold for $k > 0$.
	Let $T = \val(S_1,S_2)$ be of height $k+1$. 
	By the induction hypothesis for the valuation tree $T({<}k)$ there exists 
	a unique structural isomorphism $f \colon T_2({<}k) \to T({<}k)$.  
	Denote by $e \colon k+1 \to \omega$ the increasing enumeration of $L_{T_2}(T)$.	Fix a node $\vec{u} \in T_1(k-1)$. Then there is a unique $\vec{v} \in S_1(k-1)$
	such that $u_{i} =  v_{e(i)}$ for every $i \in k$.
	For every $A \in T_2(k-1)$, there is a unique $C \in \ImmSucc_{S_2}(f(A)) \cap \Succ_{T_2}(f(A)\conc \vec{v})$.
	We extend the map $f$ by declaring $f \colon A\conc\vec{u} \mapsto C$, 
	and we do this for each choice of $\vec{u}$ and $A$.
	It is easy to check that the extended map is a structural isomorphism of $T_2({<}k+1)$ and $T$,
	and that the extension was in fact defined in the unique possible way.
	
	If $k = \omega$, then by the induction hypothesis there are structural isomorphisms
	$f_i \colon T_2({<}i) \to T({<}i)$ for each $i \in \omega$.
	Since these isomorphisms are unique, we get $f_i \subset f_j$ for $i < j$,
	and $f = \bigcup\{\,f_i : i \in \omega \,\}$ is the desired structural isomorphism.
	On the other hand, if $g \colon T_2 \to T$ is a structural isomorphism, 
	then for each $i \in \omega$ the restriction $g \restriction  T_2({<}i) \to T({<}i)$ is a structural isomorhphims
	and due to the induction hypotheses has to be equal to $f_i$, 
	consequently $g = f$.
\end{proof}

\begin{definition}
	\label{def:G}
	Let $\str{G}$ be a 3-uniform hypergraph defined by the following two rules:
	\begin{enumerate}
		\item The vertex set of $\str{G}$ consists of all nodes of $T_2$. In particular, the vertices of $\str{G}$ are square $\{\, 0,1 \,\}$-matrices.
		\item There is a hyperedge $\{\,A,B,C \,\}$ in $\str{G}$ if and only if $A, B, C \in T_2$ are matrices satisfying $\card{A}<\card{B}<\card{C}$ and $C_{\card{B},\card{A}}=1$.
	\end{enumerate}
\end{definition}

Recall that $\str{H}$ denotes the universal countable homogeneous 3-uniform
hypergraph.  Without loss of generality, we assume that the vertex set of
$\str{H}$ is $\omega$.  Let $\varphi \colon \str{H}\to \str{G}$ be an embedding defined by
setting $\varphi(i)=A^i$.
Here, $A^i$ is a $(2i + 1) \times (2i +1)$ matrix such that
if $\{\, j,k,i \,\}$ is a hyper-edge of $\str{H}$ with $j < k < i$, then $A^i_{2k+1,2j}=A^i_{2k+1,2j+1} = 1$
and there are no other non-zero values in $A^i$; see Example~\ref{exam:matrix}.

\begin{example}\label{exam:matrix}
	\setlength\arraycolsep{2pt}
	Assume that $\str{H}$ starts with vertices $\{\, 0,1,2,3 \,\}$ with hyper-edges $\{\, 0,1,2 \,\}$, $\{\, 0,1,3 \,\}$
	and $\{\, 1,2,3 \,\}$.
	Then the corresponding images in $\str{G}$ are:
	\[
		\varphi(0)=
		\begin{pmatrix}
			0
		\end{pmatrix},
		\varphi(1)=
		\begin{pmatrix}
			0 & 0 & 0 \\
			0 & 0 & 0 \\
			0 & 0 & 0 \\
		\end{pmatrix},
		\varphi(2)=
		\begin{pmatrix}
			0 & 0 & 0 & 0 & 0 \\
			0 & 0 & 0 & 0 & 0 \\
			0 & 0 & 0 & 0 & 0 \\
			1 & 1 & 0 & 0 & 0 \\
			0 & 0 & 0 & 0 & 0 \\
		\end{pmatrix},
		\varphi(3)=
		\begin{pmatrix}
			0 & 0 & 0 & 0 & 0 & 0 & 0 \\
			0 & 0 & 0 & 0 & 0 & 0 & 0 \\
			0 & 0 & 0 & 0 & 0 & 0 & 0 \\
			1 & 1 & 0 & 0 & 0 & 0 & 0 \\
			0 & 0 & 0 & 0 & 0 & 0 & 0 \\
			0 & 0 & 1 & 1 & 0 & 0 & 0 \\
			0 & 0 & 0 & 0 & 0 & 0 & 0 \\
		\end{pmatrix}.
	\]
\end{example}

It is easy to check that $\varphi$ is indeed a hypergraph embedding.
We also have the following simple observation.

\begin{observation}\label{obs:matrix}
	For any two matrices $A, B \in \varphi[\str{H}]$,
	\begin{enumerate}
		\item\label{obs:matrix1} all even rows are constant $0$-vectors and so $\card{A \wedge_{T_2} B}$ is odd, and
		\item\label{obs:matrix2} if $\vec{v} \neq \vec{u}$ are two rows of $A$ and $B$, respectively,
		then $\card{\vec{v} \wedge_{T_1} \vec{u}}$ is even.\qed
	\end{enumerate}
\end{observation}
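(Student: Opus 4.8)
The plan is to unwind the definition of the embedding $\varphi$, record the exact shape of the matrices $A^i := \varphi(i)$, and then in both parts argue by inspecting the first coordinate (respectively level) at which two rows (respectively two matrices) disagree.

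For Part~(1), I would first note that, by the defining property of $A^i$, all its nonzero entries lie in rows of odd index — the rows $2k+1$ coming from hyper-edges $\{\,j,k,i\,\}$ of $\str H$ — so every even-indexed row of $A$ and of $B$ is the zero vector, which is the first assertion. For the parity of $m := \card{A \wedge_{T_2} B}$, recall that $A \wedge_{T_2} B = A\restriction m = B\restriction m$ is the largest common initial square submatrix. Supposing $m$ even, and taking $\card A \le \card B$ without loss of generality, we have $m < \card A \le \card B$ since $\card A$ and $\card B$ are odd; so $A\restriction(m+1)$ and $B\restriction(m+1)$ are defined and, by maximality of $m$, differ at some entry $(m,j)$ or $(i,m)$ with $i,j \le m$. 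But entries of the form $(i,m)$ vanish in both matrices by strict lower triangularity, and entries of the form $(m,j)$ vanish in both because the row index $m$ is even — a contradiction. Hence $m$ is odd.

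For Part~(2), I would first record the shape of an arbitrary row $\vec w$ of $A^i$: if its index is even then $\vec w$ is the zero vector; and if its index is $2k+1$ then, since a nonzero entry in that row can occur only in columns $2j$ or $2j+1$ with $j<k$, all coordinates of $\vec w$ from position $2k$ onward are $0$, while for $c<2k$ the value $w_c$ depends only on $\lfloor c/2\rfloor$. The consequence I need is the pairing property $w_{2q}=w_{2q+1}$ whenever $2q+1 < \card{\vec w}$. Then, writing $p := \card{\vec v \wedge_{T_1} \vec u}$ and supposing $p = 2q+1$ odd, coordinate $2q+1$ is available in both $\vec v$ and $\vec u$; applying the pairing property to each and combining with $v_{2q}=u_{2q}$ (which holds as $2q<p$) yields $v_{2q+1}=v_{2q}=u_{2q}=u_{2q+1}$, so $\vec v$ and $\vec u$ agree at coordinate $2q+1$ too, contradicting that their meet has length exactly $2q+1$. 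Hence $p$ is even.

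I do not expect a deep obstacle: once the explicit description of $\varphi(i)$ is on the table, both parts are short computations. The steps I would carry out with some care are the bookkeeping in Part~(2) — that the entry in row $2k+1$ and column $2k$ really is $0$, which is where one uses $j<k$ and not merely strict lower triangularity, and that coordinate $2q+1$ genuinely lies in the domains of both $\vec v$ and $\vec u$ at the point where the pairing property is used.
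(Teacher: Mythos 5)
The paper offers no proof of this observation (it is stated with a \qed), so there is nothing to compare against; your argument is the natural one, and part~(1) is complete and correct: the only possibly nonzero entries of $\varphi(i)$ sit in rows of odd index, and your parity argument for $\card{A\wedge_{T_2}B}$ (an even boundary row is zero in both matrices, the boundary column is zero by strict lower triangularity) is exactly right. The pairing property $w_{2q}=w_{2q+1}$ that drives part~(2) is also correctly established, including the point that the vanishing of the entry in row $2k+1$, column $2k$ comes from the condition $j<k$ rather than from triangularity alone.

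The one step you flagged in part~(2) --- that coordinate $2q+1$ lies in the domain of both $\vec{v}$ and $\vec{u}$ --- is, however, not merely a bookkeeping point: it fails precisely when one of the two rows is an end-extension of the other, and in that case the statement as literally written is false. For instance, the \nth{0} rows of $\varphi(1)$ and $\varphi(2)$ are distinct constant $0$-vectors of different (odd) lengths, one a prefix of the other, so their meet in $T_1$ is the shorter row and has odd length. Your contradiction argument therefore only proves part~(2) under the additional hypothesis that $\vec{v}$ and $\vec{u}$ are $\leq_{T_1}$-incomparable (equivalently, that $\vec{v}\wedge_{T_1}\vec{u}$ is a proper prefix of both), since only then is coordinate $p=2q+1$ available in both vectors and a genuine point of disagreement. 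This is in fact the only case in which the observation is invoked: in step~(iii) of the proof of Lemma~\ref{lem:envelopes}, a comparable pair contributes a meet equal to one of the two vectors and hence introduces no new level, so only incomparable pairs matter. So your proof is correct for the statement as it is actually used; to be airtight you should either add the incomparability hypothesis to part~(2) or note explicitly that the comparable case is vacuous for the intended application.
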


Now, we prove the last auxiliary result that we use in the proof of
Theorem~\ref{thm:3uniform}.  This is a standard step of constructing the
envelope of a set as used by Laver and Milliken~\cite[Section
	6.2]{todorcevic2010introduction}. Here we additionally need to take care of the interactions between the two trees.

\begin{lemma}\label{lem:envelopes}
	For every $k \in \omega$, there exists $R(k) \in \omega$ such that, for every set $S\subset \omega$ of size $k$, there
	exists a valuation tree of height at most $R(k)$ containing all vertices of $\varphi[S]$.
\end{lemma}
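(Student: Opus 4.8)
The plan is to build a valuation tree around $\varphi[S]$ by a two-stage construction: first enlarge the image of $S$ inside $T_2$ to a subtree whose level set is small and controlled, then (using the correspondence from Lemma~\ref{obs:isom}) realize that subtree as a valuation tree by exhibiting a suitable vector strong subtree $(S_1,S_2)$ of $\str{T}$. The bound $R(k)$ will be extracted purely from $k$ because at every stage the number of levels we are forced to introduce is bounded by a function of $k$ alone.

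First I would consider the set $E_2 \subseteq T_2$ consisting of all matrices $A \in \varphi[S]$ together with all meets $A \wedge_{T_2} B$ for $A, B \in \varphi[S]$; this is a subtree of $T_2$ with at most $2k-1$ nodes, hence its level set $L_{T_2}(E_2)$ has size at most $2k-1$. By Observation~\ref{obs:matrix}\eqref{obs:matrix1} all these levels are odd. Next I want to understand which entries of the matrices in $E_2$ actually matter: for matrices $A,B,C \in \varphi[S]$ with $\card{A} \le \card{B} < \card{C}$, the relevant entry is $C_{\card{B},\card{A}}$, and since rows of matrices in $\varphi[\str{H}]$ come in equal consecutive pairs (the rows $2k$ and $2k+1$ agree up to the relevant block, and even rows are zero), the "passing information" is determined along the levels in $L_{T_2}(E_2)$. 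To turn $E_2$ into a valuation tree I must also fix, for each non-maximal node of $E_2$, the vector of $S_1$ that was used to pass to its successor; these vectors live in $T_1$ at the corresponding levels, and I need to take meets of these as well, adding at most another bounded (in $k$) number of levels to $L_{T_1}$. By Observation~\ref{obs:matrix}\eqref{obs:matrix2} those levels are even. Interleaving the odd levels from $T_2$ and the even levels from $T_1$ gives a single finite level set $L \subseteq \omega$ of size bounded by some explicit $R(k)$.

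With $L$ in hand I would construct the vector strong subtree. Using Observation~\ref{obs-subtree} I extend $E_2$ to a strong subtree $S_2$ of $T_2$ with $L_{T_2}(S_2) = L_{T_2}(E_2)$ "padded out" to all of $L$ on the $T_2$-side, and similarly build a strong subtree $S_1$ of $T_1$ on the matching levels, choosing at each splitting level the branches dictated by the recorded passing vectors (and filling in the remaining levels of $L$ arbitrarily so that $S_1$ and $S_2$ have the same height and $(S_1,S_2)$ is level compatible with level set $L$). The point of including all the meets when we defined $E_2$ and the auxiliary vectors is exactly that this extension can be done while keeping $S_1$ and $S_2$ strong and level compatible. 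Then $\val(S_1,S_2)$ is, by Definition~\ref{def:valuation_tree}, a valuation tree; by construction it contains the root and, tracing through the recursive rules, every node of $\varphi[S]$; and its height is $\card{L}$, which is at most $R(k)$.

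I expect the main obstacle to be the bookkeeping that guarantees level compatibility of $(S_1,S_2)$ together with the strong-subtree conditions: the levels forced by $T_2$ (odd, from adjacency-tensor slices) and those forced by $T_1$ (even, from the passing vectors) must be merged into one level set $L$, and one must check that inserting the "padding" levels does not destroy the passing data nor force any incompatible branching. Concretely, the delicate point is rule (2) of Definition~\ref{def:valuation_tree}: the successor of a valuation-tree node $A$ is determined by a vector $\vec v \in S_1(\card{A}_{S_2})$, so the level of $\vec v$ in $S_1$ must match the $S_2$-level of $A$, and one has to verify that the vectors we recorded from $\varphi[S]$ can simultaneously be realized as nodes of a single strong subtree $S_1$ at the correct relative levels. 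Establishing that this is always possible — with the number of extra levels bounded by a function of $k$ — is the heart of the argument; everything else is a routine verification that the resulting $\val(S_1,S_2)$ has the claimed properties.
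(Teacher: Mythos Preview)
Your plan is essentially the paper's proof: close $\varphi[S]$ under meets in $T_2$, record the relevant rows and close them under meets in $T_1$, merge the two level sets (using the odd/even parity from Observation~\ref{obs:matrix} to see that no further iteration is needed), and extend via Observation~\ref{obs-subtree} to a level-compatible vector strong subtree whose valuation tree contains $\varphi[S]$. The one ingredient you have not made explicit is the paper's device of including a long constant-$0$ vector on the $T_1$ side: this is what guarantees that at the even levels (those introduced by $T_1$-meets, where by Observation~\ref{obs:matrix}\eqref{obs:matrix1} the corresponding row of any matrix in $\varphi[S]$ is identically zero) the required passing vector actually lies in $S_1$, after which the check of rule~(2) in Definition~\ref{def:valuation_tree} is routine.
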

\begin{proof}
	Choose an arbitrary natural number $k$, we will show that there is a number $R(k)$ with the desired property.
	To do so, let $S$ be a set of $k$ elements from $\omega$.
	We will construct the required strong vector subtree $(S_1,S_2)$ of $\str{T}$ such that $\val(S_1,S_2)$ contains all matrices from $\varphi[S]$.
	The construction will take a determined number of steps 
	and the upper bound on the height of the constructed tree $(S_1,S_2)$ will thus be a function of $k$.
	
	To achieve this, we define envelopes $E_1$ and $E_2$ in the trees $T_1$ and $T_2$, respectively, by first collecting all necessary matrices in $E_2$,
	then inserting all necessary vectors into $E_1$, which in turn requires adding more matrices to $E_2$.
	The important upshot of our construction is that we can argue that this process promptly terminates and the resulting envelopes are bounded in size.
	
	We proceed in four steps, first defining auxiliary sets $E^0_1 \subseteq T_1$ and $E^0_2 \subseteq T_2$ that will be further extended to $E_1$ and $E_2$, respectively.
	\begin{enumerate}[label=(\roman*)]
		\itemsep0.5em
		\item\label{step1} Let $E^0_2 = \{\, A \wedge_{T_2} B : A,B\in \varphi[S] \,\}\subset T_2$.
		\smallskip\noindent
		This is necessary to obtain a subtree of $T_2$. Observe that $\card{E^0_2}\leq 2k-1$.
		
		\item\label{step2} Let $E^0_1 \subset T_1$ consist of all \nth{\card{B}} rows of $A$ for all $A, B\in E^0_2$, $\card{B} < \card{A}$
		and a constant $0$-vector of length $\max L_{T_2}(E^0_2)$.
		\smallskip\noindent
		This is necessary to obtain a valuation tree that contains all of $E^0_2$. The additional zero vector is added to make the level sets of $E^0_1$ and $E^0_2$ equal. Observe that  $\card{E^0_1}\leq \card{E^0_2}^2+1$.
		
		\item\label{step3} Let $E_1 = \{\, \vec{u} \wedge_{T_1} \vec{v} : \vec{u},\vec{v}\in E^0_1 \,\}$.
		\smallskip\noindent
		This is necessary to obtain a subtree of~$T_1$. Observe that $\card{E_1}\leq 2\card{E^0_1}-1$.
		
		\item\label{step4} Let $E_2$ extend $E^0_2$ by all matrices $A \restriction \card{\vec{v}}$
		where $A \in E^0_2$ and $\vec{v}\in E^0_1$.
		\smallskip\noindent
		This is necessary in order to synchronize levels between both subtrees. Observe that $\card{E_2}\leq \card{E^0_2}({\card{E_1}}+1)$.
	\end{enumerate}
	It follows from step~\ref{step3} that $E_1$ is meet closed in $T_1$ and thus it is a subtree of $T_1$.
	Similarly, step~\ref{step1} implies that $E^0_2$ is a subtree of $T_2$. Thus also $E_2$ is a subtree of~$T_2$, as we did not introduce any new meets in step~\ref{step4}.
	It follows from the upper bounds on $\card{E_1}$ and $\card{E_2}$ that the height of $E_2$ is bounded from above by a function of $k$.
	
	By step~\ref{step4}, the level sets of $E_1$ and $E_2$ are the same, that is, 	$L = L_{T_2}(E_2) = L_{T_1}(E_1)$.
	Now, let $S_1$ be some strong subtree of $T_1$ containing $E_1$ such that $L_{T_1}(S_1) = L$ and let
	$S_2$ be some strong subtree of $T_2$ containing $E_2$ such that $L_{T_2}(S_2) = L$.
	Such trees $S_1$ and $S_2$ exist by Observation~\ref{obs-subtree}.
	
	We claim that $\val(S_1, S_2)$
	contains $\varphi[S]$.
	Choose any matrix $A \in \varphi[S] \subseteq E_2 \subseteq S_2$.
	We prove by induction on the level $\ell \in L$, where $\ell \leq \card{A}$, that $A\restriction \ell \in \val(S_1,S_2)$.
	For the base of the induction, if $\ell$ is the minimal element of $L$, 
	then $A\restriction \ell$ is the root of $S_2$ and hence the root of $\val(S_1,S_2)$.
	
	To prove the induction step, we need to check that if $A\restriction \ell \in S_2$ for $\ell < \card{A}$,
	then the \nth{\ell} row $\vec{v}$ of $A$, is a node of $S_1$.
	Observe that level sets of the constructed sets ``$E$'' are extended only at steps~\ref{step1} and~\ref{step3} of the construction.
	Moreover, all new levels introduced during step~\ref{step1} are odd by part~(\ref{obs:matrix1}) of Observation~\ref{obs:matrix} while levels introduced at step~\ref{step3} are even by part~(\ref{obs:matrix2}) of Observation~\ref{obs:matrix}.
	
	We distinguish two cases based on the parity of the level $\ell$.
	If $\ell$ is odd, then $\ell \in L_{T_2}(E^0_2)$, since all odd levels are introduced only in step~\ref{step1}.
	By step~\ref{step2}, we then have $\vec{v} \in E^0_1$.
	Since $E^0_1 \subseteq E_1 \subseteq S_1$, we have $\vec{v} \in S_1$.

	Otherwise $\ell$ is even.
	Then $\vec{v}$ is a constant $0$-vector by part~(\ref{obs:matrix1}) of Observation~\ref{obs:matrix}. We have $\vec{v} \in S_1$, since one of the maximal nodes of $S_1$ is a constant $0$-vector by step~\ref{step2}.
	
	Altogether, $\val(S_1,S_2)$ contains all matrices from $\varphi[S]$, which finishes the proof.
\end{proof}

We can now proceed with the proof of Theorem~\ref{thm:3uniform}.

\begin{proof}[Proof of Theorem~\ref{thm:3uniform}]
	Fix a finite 3-uniform hypergraph $\str{A}$.
	Recall  that we want to prove that there exists a number $\ell=\ell(\str{A})$ such that for every finite $k$
	$$\str{H}\longrightarrow (\str{H})^\str{A}_{k,\ell}.$$
	That is, for every colouring $\chi^0 \colon {\str{H}\choose\str{A}}\to k$ there is an embedding $g \colon \str{H} \to \str{H}$ such that $\chi^0$ does not take more than $\ell$ values on $\binom{g[\str{H}]}{\str{A}}$.
	
	Consider the 3-uniform hypergraph $\str{G}$ introduced in Definition~\ref{def:G}.
	Since $\str{G}$ is a countable 3-uniform hypergraph, it follows from the properties of $\str{H}$ that there is an embedding $\theta \colon \str{G}\to \str{H}$.
	Consider the colouring $\chi\colon{\str{G}\choose\str{A}} \to k$ obtained by setting
	$\chi\left(\widetilde{\str{A}}\right) = \chi^0\left(\theta\left(\widetilde{\str{A}}\right)\right)$ for every $\widetilde{\str{A}} \in \binom{\str{G}}{\str{A}}$.
	
	Consider the vector tree $\str{T}=(T_1,T_2)$ given by Definition~\ref{def:T1T2}.
	Let $h = R(\card{\str{A}})$ be given by Lemma~\ref{lem:envelopes}.
	Let $\str{G}_h$ be the induced sub-hypergraph of $\str{G}$ on $T_2({<}h)$.
	We enumerate the copies of $\str{A}$ in
	$\str{G}_h\choose\str{A}$ as $\{\, \widetilde{\str{A}}_i : i \in \ell \,\}$ for some $\ell \in \omega$ which will give the upper bound on the big Ramsey degree of $\str{A}$.
	
	By Lemma~\ref{obs:isom}, for every valuation tree $T$ of height $h$,
	there is a structural isomorphism $f_T \colon \str{G}_h \to T$ that is also an isomorphism of
	the corresponding sub-hypergraphs of $\str{G}$.
	Let $\str{S}=(S_1, S_2)$ be a strong subtree of $\str{T}$ of height $h$ and
	consider the structural isomorphism $f = f_{\val(S_1, S_2)} \colon \str{G}_h \to \val(S_1, S_2)$.
	Put \[\bar\chi(\str{S}) = \left\langle\, \chi\left(f\left(\widetilde{\str{A}}_i\right)\right) : i \in n \,\right\rangle,\]
	which is a finite colouring of $\Str_h(\str{T})$.
	By Theorem~\ref{thm:Milliken}, there is an infinite strong subtree of $\str{T}$ monochromatic with respect to $\bar\chi$.
	Let $U$ be its corresponding valuation subtree.
	The structural isomorphism $\psi \colon T_2\to U$ given by Lemma~\ref{obs:isom} is a hypergraph embedding $\psi \colon \str{G}\to \str{G}$.
	Since, by Lemma~\ref{lem:envelopes}, every $\widetilde{\str{A}}\in {\str{G}\choose \str{A}}$
	is contained in at least one valuation subtree of height $h$, we know that $\chi$ takes at most $\ell$ different values on $\psi[\str{G}]\choose\str{A}$.
	Considering the embedding $\varphi \colon \str{H} \to \str{G}$ defined earlier, the image $\theta\left[\psi\left[\varphi[\str{H}]\right]\right]$ is the desired copy $g[\str{H}]$ of $\str{H}$, in which copies of $\str{A}$ have at most $n$ different colours in~$\chi^0$.
\end{proof}

\section{Concluding remarks}
\noindent
{\bf 1.} The construction naturally generalises to $d$-uniform hypergraphs for any $d\geq 2$.
Identifying the underlying set of the $d$-uniform hypergraph with $\omega$ we get a $d$-dimensional adjacency 
$\{\,0,1 \,\}$-tensor. 
We can now consider the hypergraph consisting of $(d-1)$-dimensional `sub-diagonal' $\{\,0,1 \,\}$-tensors 
with the edge relation being defined analogously as in the 3-uniform case. 
The $(d-1)$-dimensional tensors ordered by extension now form a tree $T_{d-1}$.
Sub-hypergraphs isomorphic to $T_{d-1}$ will be again constructed using Milliken's Tree Theorem 
used for the vector tree $\str{T}_d=(T_1,\ldots, T_{d-1})$ and by defining valuation subtrees of $T_{d-1}$.
Nodes of a tree $T_i$ with $1\leq i\leq d-1$ are $\{\, 0,1 \,\}$-tensors of order
$i$ ordered analogously as in the tree $T_2$ used in Section~\ref{sec:proof}. 
The definition of the valuation
tree from Section~\ref{sec:proof} also naturally generalises; given a vector strong subtree $\str{S}=(S_1,\ldots, S_{d-1})$ one first obtains the valuation tree $\val(S_1,S_2)$.
Based on $\val(S_1,S_2)$ and $S_3$ the valuation subtree $\val(S_1,S_2,S_3)$ of $T_3$ can be
constructed in analogy to Definition~\ref{def:valuation_tree}. 
The construction then proceeds similarly for higher orders, defining $\val(S_1,S_2, \ldots, S_i)$ for all $i < d$. 
The final valuation tree $\val(S_1,S_2, \ldots, S_{d-1})$ is the desired subtree of $T_{d-1}$. 
A detailed description of these constructions is going to appear in full generality in~\cite{Balko2020b}.

\medskip

\noindent
{\bf 2.} More generally, structures in a finite relational language with symbols of maximum arity $d$ can be represented
by vector trees $\str{T}_d=(T_1,\ldots, T_{d-1})$. In this case, the nodes of a tree $T_i$ with $1\leq i\leq d-1$ are sequences
of tensors of order $i-d+a$ for every relational symbol of arity $a>i-d$.

\medskip

\noindent
{\bf 3.} We aimed for simplicity in our proof of Theorem~\ref{thm:3uniform}. The bounds obtained in the proof are not optimal.
Structures defined in~\cite{Hubickabigramsey} can be used to produce a more careful
embedding of hypergraphs to $\str{H}$. They describe the order in which the branchings
of the trees $T_1$ and $T_2$ and of the actual vertices appear.
This is also going to appear in~\cite{Balko2020b}.

\section{Acknowledegemnt}
We would like to thank to Stevo Todorcevic for his kind remarks and for helpful discussion regarding
the history and context of this area. We are also grateful to three anonymous referees whose
suggestions improved the presentation of this paper.

\bibliography{ramsey.bib}

\end{document}